\documentclass[11pt,reqno]{amsart}

\setlength{\textwidth}{6.3in} \setlength{\textheight}{9.25in}
\setlength{\evensidemargin}{0in} \setlength{\oddsidemargin}{0in}
\setlength{\topmargin}{-.3in}

\usepackage{xspace}
\usepackage{amsmath,amsthm,amsfonts,amssymb,latexsym,mathrsfs,color}
\usepackage{hyperref}

\newtheorem{theorem}{Theorem}
\newtheorem{cor}[theorem]{Corollary}
\newtheorem{prop}[theorem]{Proposition}

\newtheorem{ex}[theorem]{Example}

\newtheorem{definition}[theorem]{Definition}

\newcommand{\ap}{{\rm ap\,}}
\newcommand{\asc}{{\rm asc\,}}

\newcommand{\cyc}{{\rm cyc\,}}

\newcommand{\exc}{{\rm exc\,}}

\newcommand{\cdes}{{\rm cdes\,}}

\newcommand{\nega}{{\rm neg\,}}
\newcommand{\pos}{{\rm pos\,}}
\newcommand{\st}{{\rm star\,}}

\newcommand{\oo}{{\rm oo\,}}
\newcommand{\so}{{\rm so\,}}

\newcommand{\oee}{{\rm oe\,}}
\newcommand{\ee}{{\rm ee\,}}
\newcommand{\eo}{{\rm eo\,}}
\newcommand{\mq}{\mathcal{Q}}
\newcommand{\mqn}{\mathcal{Q}_n}

\newcommand{\msn}{\mathfrak{S}_n}

\newcommand{\msnn}{\mathfrak{S}_{n+1}}
\newcommand{\z}{ \mathbb{Z}}

\newcommand{\my}{{\rm MY}}

\newcommand{\mmn}{\mathcal{M}_{2n}}
\newcommand{\mmnn}{\mathcal{M}_{2n+2}}

\newcommand{\m}{{\rm M}}
\newcommand{\mm}{\mathcal{M}}

\newcommand{\myn}{\mathcal{Y}_{n}}
\newcommand{\mynn}{\mathcal{Y}_{n-1}}

\newcommand{\OO}{{\rm OO}}
\newcommand{\OEE}{{\rm OE}}
\newcommand{\EE}{{\rm EE}}
\newcommand{\EO}{{\rm EO}}
\newcommand{\BBA}{{\rm BB algorithm}}
\newcommand{\LSPA}{{\rm LSP algorithm}}
\newcommand{\LPA}{{\rm LP algorithm}}

\newcommand{\arxiv}[1]{\href{http://arxiv.org/abs/#1}{\texttt{arXiv:#1}}}

\linespread{1.25}

\title{Stirling permutations, cycle structures of permutations and perfect matchings}
\author[S.-M.~Ma]{Shi-Mei Ma}
\address{School of Mathematics and Statistics,
        Northeastern University at Qinhuangdao,
         Hebei 066004, P.R. China}
\email{shimeimapapers@gmail.com (S.-M. Ma)}
\author[Y.-N. Yeh]{Yeong-Nan Yeh}
\address{Institute of Mathematics,
        Academia Sinica, Taipei, Taiwan}
\email{mayeh@math.sinica.edu.tw (Y.-N. Yeh)}
\subjclass[2010]{Primary 05A15; Secondary 05A19}

\begin{document}

\maketitle
\begin{abstract}
In this paper we provide a unified combinatorial approach to establish a
connection between Stirling permutations, cycle structures of permutations and perfect matchings.
The main tool of our investigations is $\my$-sequences.
In particular, we discover that the Eulerian polynomials have a simple combinatorial interpretation in terms of some statistics on $\my$-sequences.
\bigskip\\
{\sl Keywords:} Stirling permutations; Eulerian polynomials; Perfect matchings; $\my$-sequences
\end{abstract}
\date{\today}
\section{Introduction}
Stirling permutations were defined by Gessel and Stanley~\cite{Gessel78}.
Let $j^2:=\{j,j\}$ for $j\geq 1$.
A {\it Stirling permutation} of order $n$ is a permutation of the multiset $\{1^2,2^2,\ldots,n^2\}$ such that
every element between the two occurrences of $i$ are greater than $i$ for each $i\in [n]$,
where $[n]=\{1,2,\ldots,n\}$.
Denote by $\mqn$ the set of Stirling permutations of order $n$.
For $\sigma=\sigma_1\sigma_2\cdots\sigma_{2n}\in\mqn$, an occurrence of
an {\it ascent} (resp. {\it~a plateau}) is an index $i$
such that $\sigma_i<\sigma_{i+1}$ (resp. $\sigma_i=\sigma_{i+1}$),
Recently, there is a large literature devoted to Stirling permutations and their generalizations.
The reader is referred to~\cite{Bona08,Haglund12,Janson11,Ma14,Remmel14} for recent
progress on the study of statistics on Stirling permutations.

In this paper, we always assume that Stirling permutations are prepended by 0.
That is, we identify an $n$-Striling permutation
$\sigma_1\sigma_2\cdots\sigma_{2n}$ with the word $\sigma_0\sigma_1\sigma_2\cdots\sigma_{2n}$,
where $\sigma_0=0$.
Let $\sigma=\sigma_1\sigma_2\cdots\sigma_{2n}\in \mqn $.
We say that an index $i\in [2n-1]$ is an {\it ascent plateau} if $\sigma_{i-1}<\sigma_i=\sigma_{i+1}$ (see~\cite{Ma14}).
Let $\ap(\sigma)$ be the number of the ascent plateaus of $\sigma$. For example, $\ap(\textbf{2}211\textbf{3}3)=2$.

We define
\begin{equation}\label{Nnx-Stirling}
N_n(x)=\sum_{\sigma \in \mqn}x^{\ap(\sigma)}.
\end{equation}
Analyzing the placement of $2$ copies of $(n+1)$, it is easy to deduce that the polynomials $N_n(x)$ satisfy the recurrence relation
$$N_{n+1}(x)=(2n+1)xN_n(x)+2x(1-x)N'_n(x)$$
with the initial value $N_0(x)=1$.
The exponential generating function for $N_n(x)$ is given as follows (see~\cite[Section~5]{Ma13}):
\begin{equation}\label{EGF-Nnx}
N(x,z)=\sum_{n\geq 0}N_n(x)\frac{z^n}{n!}=\sqrt{\frac{1-x}{1-xe^{2z(1-x)}}}.
\end{equation}
The first few of $N_n(x)$ are
$$N_1(x)=x,
N_2(x)=2x+x^2,
N_3(x)=4x+10x^2+x^3,
N_4(x)=8x+60x^2+36x^3+x^4.$$

Let $\msn$ denote the permutation group on the set $[n]$
and $\pi=\pi(1)\pi(2)\cdots \pi(n)\in\msn$. An {\it excedance} in $\pi$ is an
index $i$ such that $\pi(i)>i$. Let $\exc(\pi)$ denote the number of
excedances in $\pi$. The classical
Eulerian polynomials $A_n(x)$ are defined by
$$A_0(x)=1,\quad A_n(x)=\sum_{\pi\in\msn}x^{\exc(\pi)}\quad\textrm{for $n\ge 1$},$$
and have been extensively investigated (see~\cite{Dilks09,FS70,Gessel93} for instance).
In~\cite{FS70},
Foata and Sch\"utzenberger introduced a $q$-analog of the
Eulerian polynomials defined by
\begin{equation*}\label{anxq-def}
A_n(x;q)=\sum_{\pi\in\msn}x^{\exc(\pi)}q^{\cyc(\pi)}.
\end{equation*}
where $\cyc(\pi)$ is the number of cycles in $\pi$.
Brenti~\cite{Bre94,Bre00} further studied $q$-Eulerian polynomials
and established the link with $q$-symmetric functions arising from
plethysm. In particular, Brenti~\cite[Proposition 7.3]{Bre00} obtained the exponential generating function for $A_n(x;q)$:
\begin{equation*}\label{anxq-rr}
1+\sum_{n\geq 1}A_n(x;q)\frac{z^n}{n!}=\left(\frac{1-x}{e^{z(x-1)}-x}\right)^q.
\end{equation*}

For $k\geq 1$, the {\it $1/k$-Eulerian polynomials} $A_n^{(k)}(x)$ are defined by
\begin{equation}\label{Ankx-def01}
\sum_{n\geq 0}A_n^{(k)}(x)\frac{z^n}{n!}=\left(\frac{1-x}{e^{kz(x-1)}-x} \right)^{\frac{1}{k}}.
\end{equation}
Let $e=(e_1,e_2,\ldots,e_n)\in\z^n$. Let
$I_{n,k}=\left\{ e|0\leq e_i\leq (i-1)k\right\}$, which known as the set of $n$-dimensional {\it $k$-inversion sequences} (see~\cite{Savage1201}).
The number of {\it ascents} of $e$ is defined by
$$\asc(e)=\#\left\{i:1\leq i\leq n-1\big{|}\frac{e_i}{(i-1)k+1}<\frac{e_{i+1}}{ik+1}\right\}.$$
Recently, Savage and Viswanathan~\cite{Savage1202} discovered that
$$A_n^{(k)}(x)=\sum_{e\in I_{n,k}}x^{\asc(e)}=k^nA_n(x;1/k).$$

From~\eqref{EGF-Nnx} and~\eqref{Ankx-def01}, we get
\begin{equation}\label{An2x-Nnx}
A_n^{(2)}(x)=x^{n}N_n\left(\frac{1}{x}\right).
\end{equation}
Hence
\begin{equation}\label{Nnx-exc-cyc}
N_n(x)=\sum_{\pi\in\msn}x^{n-\exc(\pi)}2^{n-\cyc(\pi)}.
\end{equation}
Let $\widehat{N}_n(x)=x^{n}N_n\left(\frac{1}{x}\right)$.
It follows from~\eqref{Ankx-def01} and~\eqref{An2x-Nnx} that
\begin{equation*}\label{Anx-Nnx01}
2^nA_n(x)=\sum_{k=0}^n\binom{n}{k}\widehat{N}_k(x)\widehat{N}_{n-k}(x).
\end{equation*}

A {\it perfect matching} of $[2n]$ is a set partition of $[2n]$ with blocks (disjoint nonempty subsets) of size exactly 2.
Let $\mmn$ be the set of matchings of $[2n]$, and let $\m\in\mmn$.
The {\it standard form} of $\m$ is a list of blocks $(i_1,j_1)/(i_2,j_2)/\ldots/(i_n,j_n)$ such that
$i_r<j_r$ for all $1\leq r\leq n$ and $1=i_1<i_2<\cdots <i_n$. Throughout this paper we always write $\m$ in standard form.
It is well known that $\m$ can
be regarded as a fixed-point-free involution on $[2n]$
and also as a Brauer diagram on $[2n]$ (see~\cite{Lovasz86} for instance).
Let $\so(\m)$ be the number of blocks of $\m$ with odd smaller entries.
It is well known that
\begin{equation}\label{Nnx-Matching}
N_n(x)=\sum_{\m\in\mmn}x^{\so(\m)},
\end{equation}
which has been studied in~\cite{Ma13,Ma1302}.
In particular, from~\cite[Theorem 9]{Ma13}, we have
\begin{equation*}\label{Anx-Nnx02}
2^nxA_n(x)=\sum_{k=0}^n\binom{n}{k}N_k(x)N_{n-k}(x).
\end{equation*}

Combining~\eqref{Nnx-Stirling},~\eqref{Nnx-exc-cyc} and~\eqref{Nnx-Matching}, we get
\begin{equation}\label{bijection04}
\sum_{\sigma\in\mqn}x^{\ap(\sigma)}=\sum_{\pi\in\msn}x^{n-\exc(\pi)}2^{n-\cyc(\pi)}=\sum_{\m\in\mmn}x^{\so(\m)}.
\end{equation}
It is natural
to consider the following question: Is there existing a unified combinatorial approach to prove~\eqref{bijection04}?
The main object of this paper is to provide a solution to this problem.
The paper is organized as follows. In Section~\ref{Section-2}, we present the main results, and recall
some definitions that will be used throughout the
rest of this work. In other Sections, we give combinatorial proofs of these main results.
\section{Definitions and main results}\label{Section-2}
Let $N_n(x)=\sum_{k=1}^nN({n,k})x^k$.
Hence $N(n,k)$ is the number of perfect matchings in $\mmn$ with the restriction that only $k$ matching pairs
have odd smaller entries.
We partition the blocks of $\m$ into
four subsets:
\begin{align*}
  \OO(\m)&=\{(a,b)\in\m\mid \textrm{a and b are both odd}\}, \\
  \OEE(\m)&=\{(a,b)\in\m\mid \textrm{a is odd and b is even}\}, \\
  \EO(\m)&=\{(a,b)\in\m\mid \textrm{a is even and b is odd}\}, \\
  \EE(\m)&=\{(a,b)\in\m \mid \textrm{a and b are both even}\}.
\end{align*}
Let $\oo(\m)=\#\OO(\m),\oee(\m)=\#\OEE(\m), \eo(\m)=\#\EO(\m), \ee(\m)=\#\EE(\m)$.
It is evident that $\so(\m)=\oo(\m)+\oee(\m)$. Here we present a further characterization of $\so(\m)$.
\begin{prop}\label{prop01}
We have $\so(\m)=\oee(\m)+\ee(\m)$, and the numbers $N(n,k)$ satisfy the recurrence relation
\begin{equation}\label{recurrence-01}
N({n+1,k})=2kN({n,k})+(2n-2k+3)N(n,k-1).
\end{equation}
for $n,k\geq 1$, where $N(1,1)=1$ and $N(1,k)=0$ for $k\geq 2$ or $k\leq 0$.
\end{prop}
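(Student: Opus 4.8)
The plan is to prove the two assertions separately, establishing the identity first because it is exactly what makes the coefficients of the recurrence fall out cleanly.

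For $\so(\m)=\oee(\m)+\ee(\m)$, I would just count the $n$ odd and $n$ even labels of $[2n]$ according to block type. Each block in $\OO(\m)$ uses two odd labels, each block in $\OEE(\m)$ and each in $\EO(\m)$ uses exactly one odd label, and each block in $\EE(\m)$ uses none, so $2\oo(\m)+\oee(\m)+\eo(\m)=n$. Counting even labels the same way gives $\oee(\m)+\eo(\m)+2\ee(\m)=n$. Subtracting yields $\oo(\m)=\ee(\m)$, and since $\so(\m)=\oo(\m)+\oee(\m)$ by definition, we obtain $\so(\m)=\ee(\m)+\oee(\m)$, as claimed.

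For the recurrence I would set $N(n,k)=\#\{\m\in\mmn:\so(\m)=k\}$ and, for each $\m\in\mmn$, construct a family of $2n+1$ matchings of $[2n+2]$ by inserting the two new (largest) labels $2n+1$ (odd) and $2n+2$ (even) in one of two ways: (a) adjoin the single block $(2n+1,2n+2)$, leaving $\m$ untouched; or (b) choose one of the $n$ blocks $(i,j)$ of $\m$ with $i<j$ and replace it by one of the two split pairs $\{(i,2n+1),(j,2n+2)\}$ or $\{(i,2n+2),(j,2n+1)\}$. This yields $1+2n$ insertions, and by inspecting how $2n+1$ and $2n+2$ are matched in the result one checks that distinct insertions give distinct matchings and that every $\m'\in\mmnn$ arises exactly once: if $2n+1\sim 2n+2$ it comes from (a), and otherwise $2n+2\sim a$, $2n+1\sim b$ with $a,b\in[2n]$, so merging $a,b$ into one block recovers $\m$ while the relative order of $a,b$ records which split was used. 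Thus the $2n+1$ insertions partition $\mmnn$ into fibers indexed by $\mmn$, consistent with $|\mmnn|/|\mmn|=2n+1$.

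The crux is tracking $\so$. Since $2n+1$ and $2n+2$ both exceed every label of $[2n]$, in either split of a block $(i,j)$ the two new blocks have smaller entries exactly $i$ and $j$; hence both splits change $\so$ by the same amount $[\,j\text{ odd}\,]$, depending only on the parity of the \emph{larger} entry $j$. So a split preserves $\so$ precisely when $j$ is even, i.e. when $(i,j)\in\OEE(\m)\cup\EE(\m)$; there are $\oee(\m)+\ee(\m)$ such blocks, which by the first part equals $\so(\m)$, and each supplies $2$ splits, giving $2\so(\m)$ insertions with $\so$ unchanged. The remaining $2(n-\so(\m))$ splits (those with $j$ odd) together with insertion (a) each raise $\so$ by one, giving $2(n-\so(\m))+1$ such insertions. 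Reading these counts as the contributions to $N(n+1,k)$ from matchings with $\so=k$ and with $\so=k-1$ gives $N(n+1,k)=2k\,N(n,k)+\bigl(2(n-(k-1))+1\bigr)N(n,k-1)=2kN(n,k)+(2n-2k+3)N(n,k-1)$. The main obstacle is choosing the insertion map so that it is simultaneously exhaustive and $\so$-transparent; the decisive observation is that the two splits of a fixed block have identical effect on $\so$, determined solely by the parity of its larger entry, which converts the block-type bookkeeping into the coefficients $2k$ and $2n-2k+3$ once the identity $\oee(\m)+\ee(\m)=\so(\m)$ is in hand.
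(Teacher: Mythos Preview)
Your proof is correct and follows essentially the same approach as the paper: the parity count giving $\oo(\m)=\ee(\m)$ is identical, and for the recurrence both you and the paper use the same insertion/split map and the same key observation that the effect on $\so$ depends only on the parity of the larger entry of the split block, so that $\OEE\cup\EE$ contributes the $2k$ term and $\OO\cup\EO$ together with the appended block $(2n+1,2n+2)$ contributes the $2n-2k+3$ term. Your write-up is in fact more careful than the paper's in explicitly verifying that the insertion map is a bijection between $\mmnn$ and the set of $(2n+1)$-fold insertions on $\mmn$.
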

\begin{proof}
Given a $\m\in\mmn$. Note that
$$2\oo(\m)+\#\oee(\m)+\#\eo(\m)=n=2\#\ee(\m)+\#\oee(\m)+\#\eo(\m).$$
Then $\#\oo(\m)=\#\ee(\m)$. Hence $\so(\m)=\oee(\m)+\ee(\m)$.
Therefore, we have
\begin{equation}\label{Nnk-Mnk}
N(n,k)=\#\{\m\in\mmn: \oee(\m)+\ee(\m)=k\}.
\end{equation}

We now prove~\eqref{recurrence-01}. Let $(a,b)$ be a given subset of $\m$. Let
$\varphi$ be the construction of $\m'\in\mmnn$ that replacing $(a,b)$ by $(a, 2n+1)/(b,2n+2)$ or $(a, 2n+2)/(b,2n+1)$. We distinguish two cases.
\begin{enumerate}
  \item [\rm ($c_1$)] If $(a,b)\in \OEE$ or $(a,b)\in \EE$, then the construction $\varphi$ does not increasing the number odd smallers.
  Combining~\eqref{Nnk-Mnk}, this accounts for $2kN(n,k)$ possibilities.
  \item [\rm ($c_2$)] If $(a,b)\in \OO$ or $(a,b)\in \EO$, then the construction $\varphi$ does form a new odd smaller. Moreover, we can also append $(2n+1,2n+2)$ to $\m$.
  This gives $(2n-2(k-1)+1)N(n,k-1)=(2n-2k+3)N(n,k-1)$ possibilities.
\end{enumerate}
\end{proof}

For $k\geq 1$ and $\ell\geq 0$, we define
$$P_k=\{1,2,3,\ldots, 2k\}~\textrm{and}~ N_{\ell}=\{-1,-2,-3,\ldots, -2\ell,\star\}.$$
Let $Y_n=(y_1,y_2,\ldots,y_n)$, where $y_i\in P_k\bigcup N_{\ell}$ for $1\leq i\leq n$.
In particular, $P_1\bigcup N_0=\{1,2,\star\}, P_1\bigcup N_1=\{1,2,-1,-2,\star\}$ and $P_2\bigcup N_0=\{1,2,3,4,\star\}$.
Let $\pos(Y_n)$ (resp. $\nega(Y_n)$) be the number of positive (resp. negative) entries of $Y_n$.
Denote by $\st(Y_n)$ the number of $\star$ of $Y_n$.
Careful consideration of Proposition~\ref{prop01} yields the following definition.
\begin{definition}\label{def01}
We call the sequence $Y_n$ a {\it $\my$-sequence} of length $n$ if $y_1=\star$ and
$y_k\in P_{1+s_k}\bigcup N_{t_k}$ for $2\leq k\leq n$, where $s_k=\nega(Y_{k-1})+\st(Y_{k-1})-1$ and
$t_k=\pos(Y_{k-1})$ for $k\geq 2$.
\end{definition}
Note that $s_2=t_2=0$. Hence $y_2\in P_1\bigcup N_0$.
For example, $(\star,1,-1,2,\star)$ is a $\my$-sequence, while $(\star,1,-1,-4,2)$ is not
since $y_4<-2$. Denote by $\myn$ the set of $\my$-sequences $Y_n$. Note that $1+s_k+t_k=k-1$ for $k\geq 2$.
Therefore, we have
$$\#\myn=(2(1+s_n)+(1+2t_n))\#\mynn=(2n-1)\#\mynn=(2n-1)!!.$$

We can now present the first main result of this paper.
\begin{theorem}\label{thm01}
For $n\geq 1$, we have
\begin{equation}\label{bijection01}
\sum_{\m\in\mmn}x^{\oee(\m)+\ee(\m)}=\sum_{Y_{n}\in \myn}x^{\nega(Y_{n})+\st(Y_{n})}=\sum_{\sigma\in\mqn}x^{\ap(\sigma)}.
\end{equation}
\end{theorem}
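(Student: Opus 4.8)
The plan is to route both outer equalities of \eqref{bijection01} through the middle sum over $\myn$, by constructing two explicit statistic-preserving bijections $\Phi\colon\myn\to\mmn$ and $\Psi\colon\myn\to\mqn$. The guiding idea is that a $\my$-sequence $Y_n=(y_1,\ldots,y_n)$ is a record of an $n$-step insertion process in which $y_k$ dictates the $k$-th step, with the convention that positive entries encode the $2k$-type choices that \emph{preserve} the relevant statistic and that negative-or-star entries encode the $(2n-2k+3)$-type choices that \emph{raise} it by one. This is exactly the dichotomy in the recurrence \eqref{recurrence-01} of Proposition~\ref{prop01}, because $|P_{1+s_k}|=2(1+s_k)$, $|N_{t_k}|=2t_k+1$, and $1+s_k+t_k=k-1$. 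Throughout I will maintain the inductive invariant that $\nega(Y_{k-1})+\st(Y_{k-1})$ equals the target statistic after $k-1$ steps, so that the label ranges $P_{1+s_k}$ and $N_{t_k}$ prescribed in Definition~\ref{def01} match exactly the numbers of available choices at step $k$.

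For $\Phi$ I would read $Y_n$ from left to right, building $\m^{(k)}\in\mm_{2k}$ from $\m^{(k-1)}$ by the construction $\varphi$ from Proposition~\ref{prop01} using the new points $2k-1,2k$. By the parity analysis there, splitting a block of type $\OEE$ or $\EE$ leaves $\oee+\ee$ unchanged (there are $2(\oee+\ee)=2j$ such choices, two orientations per block), while splitting a block of type $\OO$ or $\EO$, or appending $(2k-1,2k)$, raises $\oee+\ee$ by one (there are $2(\oo+\eo)+1=2t_k+1$ such choices). I assign the positive labels $P_{1+s_k}=\{1,\ldots,2j\}$ to the $\OEE/\EE$-choices under a fixed left-to-right enumeration of blocks and a fixed orientation convention, the labels $\{-1,\ldots,-2t_k\}$ to the $\OO/\EO$-choices, and reserve $\star$ for the appended block. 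Then $y_k>0$ preserves both $\oee+\ee$ and $\nega+\st$, whereas $y_k$ negative or $\star$ raises both by one, so $\Phi$ transports $\nega+\st$ to $\oee+\ee$; invertibility is clear since the block structure of $\m^{(k)}$ determines which point-pair was inserted last and by which rule.

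For $\Psi$ I would insert the pair $kk$ into the prepended word $\sigma^{(k-1)}\in\mq_{k-1}$ at step $k$. Among the $2k-1$ gaps, inserting $kk$ always creates a new ascent plateau; the only gaps at which an existing ascent plateau is simultaneously destroyed are, for each ascent plateau at index $i$, the gap immediately before it and the gap between its two equal letters. This yields exactly $2j$ net-neutral gaps, where $j=\ap(\sigma^{(k-1)})$, and leaves $2k-1-2j=2t_k+1$ gaps that raise $\ap$ by one. Labeling the neutral gaps by $P_{1+s_k}$ and the increasing gaps by $N_{t_k}$ (with $\star$ on one distinguished increasing gap) under fixed conventions exactly as above gives a bijection with $\ap(\Psi(Y_n))=\nega(Y_n)+\st(Y_n)$.

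The main obstacle will be the bookkeeping rather than the global structure. For $\Psi$ one must check that ascent plateaus are pairwise non-adjacent (so the two neutral gaps attached to distinct ascent plateaus are distinct), that every remaining gap genuinely raises $\ap$ by one (including the inside-gap of a \emph{non}-ascent plateau, where no old ascent plateau is lost), and that the fixed labeling conventions make each step invertible; for $\Phi$ one must verify the full four-type parity table under both orientations of $\varphi$. The crux common to both maps is sustaining the invariant $\ap(\sigma^{(k-1)})=\oee(\m^{(k-1)})+\ee(\m^{(k-1)})=\nega(Y_{k-1})+\st(Y_{k-1})$, so that the prescribed ranges $P_{1+s_k}$ and $N_{t_k}$ line up with the actual counts of neutral and increasing choices at every step. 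Once this invariant holds, statistic-preservation is immediate, and matching the counts against \eqref{recurrence-01} provides a convenient consistency check.
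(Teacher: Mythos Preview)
Your proposal is correct and follows essentially the same approach as the paper: both equalities are proved by reading a $\my$-sequence $Y_n$ as an $n$-step insertion record, with positive entries encoding the statistic-preserving moves (splitting an $\OEE/\EE$ block, or inserting $kk$ at one of the two gaps adjacent to an ascent plateau) and negative/$\star$ entries encoding the statistic-increasing moves (splitting an $\OO/\EO$ block or appending $(2k-1,2k)$; inserting $kk$ at any other gap). The paper carries out exactly these two bijections under the names ``bracket-breaking algorithm'' and ``labeled Stirling permutation algorithm,'' with explicit conventions for the left-to-right labeling and the choice of $\star$-gap that you leave as ``fixed conventions.''
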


In this paper we always write $\pi\in\msn$
by its standard cycle decomposition, in which each
cycle is written with its smallest entry first and the cycles are written in ascending order of their
smallest entry.
\begin{definition}
Let $(c_1,c_2,\ldots,c_i)$ be one cycle of $\pi$. We say that $c_j$ is a {\it cycle descent} if $c_j>c_{j+1}$ for $1\leq j<i$.
\end{definition}
Let $\cdes(\pi)$ be the number of cycle descents of $\pi$.
For example, for $\pi=(1,3,\textbf{4},2)(5,7)(6)$, we have $\cdes(\pi)=1$.
For $\pi\in\msn$, it is clearly that $\exc(\pi)+\cyc(\pi)+\cdes(\pi)=n$. Therefore,
it follows from~\eqref{Nnx-exc-cyc} that
\begin{equation*}\label{Nnx-cdes}
N_n(x)=\sum_{\pi\in\msn}x^{\cyc(\pi)+\cdes(\pi)}2^{n-\cyc(\pi)}.
\end{equation*}

Now we present the second main result of this paper.
\begin{theorem}\label{thm05}
For $n\geq 1$, we have
\begin{equation}\label{bijection03}
\sum_{Y_{n}\in \myn}x^{\nega(Y_{n})}y^{\st(Y_{n})}z^{\pos(Y_n)}=\sum_{\pi\in\msn}(2x)^{\cdes(\pi)}y^{\cyc(\pi)}(2z)^{\exc(\pi)}.
\end{equation}
\end{theorem}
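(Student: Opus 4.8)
The plan is to prove that the two sides of~\eqref{bijection03} satisfy the same recurrence and agree at $n=1$, and then conclude by induction. Write
\[
L_n=\sum_{Y_n\in\myn}x^{\nega(Y_n)}y^{\st(Y_n)}z^{\pos(Y_n)},\qquad
R_n=\sum_{\pi\in\msn}(2x)^{\cdes(\pi)}y^{\cyc(\pi)}(2z)^{\exc(\pi)}
\]
for the two sides. Both will be governed by the single linear operator
\[
\mathcal{D}=2z(x\partial_x+y\partial_y)+2xz\,\partial_z+y,
\]
and since $\mathcal{Y}_1=\{(\star)\}$ and $\mathfrak{S}_1=\{(1)\}$ give $L_1=R_1=y$, it suffices to establish $L_n=\mathcal{D}L_{n-1}$ and $R_n=\mathcal{D}R_{n-1}$ for $n\ge 2$.

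First I would read off the recurrence for $L_n$ directly from Definition~\ref{def01}. Fix $Y_{n-1}\in\mynn$ contributing the monomial $x^ay^bz^c$, where $(a,b,c)=(\nega(Y_{n-1}),\st(Y_{n-1}),\pos(Y_{n-1}))$ and $a+b+c=n-1$. Appending $y_n$ uses $1+s_n=a+b$ and $t_n=c$, so there are $2(a+b)$ positive values (each raising $\pos$ by one), $2c$ negative values (each raising $\nega$ by one), and a single $\star$ (raising $\st$ by one). The monomials produced sum to $2(a+b)\,x^ay^bz^{c+1}+2c\,x^{a+1}y^bz^c+x^ay^{b+1}z^c$, which is exactly $\mathcal{D}$ applied to $x^ay^bz^c$; summing over $Y_{n-1}$ gives $L_n=\mathcal{D}L_{n-1}$.

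The substance of the proof is to obtain the very same recurrence for $R_n$ by inserting the largest letter $n$ into the standard cycle form of $\pi\in\ms_{n-1}$. There are $n-1$ insertion slots, one immediately after each existing letter, together with the option of adjoining the singleton cycle $(n)$. The key claim, to be checked by a case analysis on the cycle containing the chosen letter, is that inserting $n$ right after an excedance letter creates exactly one new cycle descent while fixing $\exc$ and $\cyc$; inserting $n$ right after any non-excedance letter (a cycle-descent letter or the final letter of its cycle) creates exactly one new excedance while fixing $\cdes$ and $\cyc$; and the new cycle $(n)$ raises $\cyc$ by one. Since $\exc(\pi)+\cdes(\pi)+\cyc(\pi)=n-1$, the slots split into $\exc(\pi)$ of the first kind and $\cdes(\pi)+\cyc(\pi)$ of the second, whence
\[
R_n=\sum_{\pi\in\ms_{n-1}}\bigl[2x\,\exc(\pi)+2z\bigl(\cdes(\pi)+\cyc(\pi)\bigr)+y\bigr](2x)^{\cdes(\pi)}y^{\cyc(\pi)}(2z)^{\exc(\pi)}.
\]
Recognising $\exc(\pi)$, $\cdes(\pi)$, $\cyc(\pi)$ as the eigenvalues of $z\partial_z$, $x\partial_x$, $y\partial_y$ on each monomial converts the bracket into $\mathcal{D}$, giving $R_n=\mathcal{D}R_{n-1}$ and completing the induction.

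The hard part will be the insertion claim of the previous paragraph, specifically its boundary behaviour. Because the wrap-around pair $(c_i,c_1)$ of a cycle is never a cycle descent, one must verify that inserting $n$ after the last letter $c_i$ produces a new \emph{excedance} (not a descent), while inserting $n$ after an excedance letter $c_j$ must be shown to turn the pair of steps $c_j\to c_{j+1}$ and $n\to c_{j+1}$ into exactly one extra descent with no net change in $\exc$. Once these few cases are settled the two recurrences are literally identical, and the factors of $2$ on the right are precisely matched by the doubled alphabets $P_{1+s_k}$ and $N_{t_k}$ on the left (positive letters $\leftrightarrow$ excedances, negative letters $\leftrightarrow$ cycle descents, $\star\leftrightarrow$ new cycles). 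A more hands-on alternative would be to realise this insertion step as an explicit bijection from $\myn$ onto permutations carrying an independent binary label on each excedance and each cycle descent; the recurrence route is preferable because it sidesteps the bookkeeping of those $2^{\cdes+\exc}$ decorations.
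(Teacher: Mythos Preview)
Your argument is correct. The paper, however, takes exactly the bijective route you set aside as the ``more hands-on alternative'': it labels the slots of a permutation in standard cycle form (negative labels $-1,-2,\ldots$ at the excedance positions, positive labels $1,2,\ldots$ at the remaining positions) and then describes an explicit \emph{LP algorithm} that reads a $\my$-sequence and inserts $2,3,\ldots,n$ one at a time, so that each $\pi\in\msn$ is hit by precisely $2^{\cdes(\pi)+\exc(\pi)}$ sequences. The underlying combinatorics---your case analysis of how inserting the new largest letter changes $(\exc,\cdes,\cyc)$---is identical in both proofs; only the packaging differs. Your operator $\mathcal{D}$ compresses the three cases into a clean two-line induction and avoids tracking the $2^{\cdes+\exc}$ binary decorations, while the paper's version yields a genuine (many-to-one) map $\myn\to\msn$ witnessing the identity, not just equality of generating polynomials.
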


Note that $\nega(Y_{n})+\st(Y_{n})+\pos(Y_n)=n$.
The following corollary is immediate.
\begin{cor}\label{cor-Anx}
For $n\geq 1$, we have
$$A_n(x)=\sum_{Y_{n}\in \myn}\left(\frac{1}{2}\right)^{\nega(Y_{n})+\pos(Y_n)}x^{\pos(Y_n)}.$$
Equivalently,
$$2^nA_n(x)=\sum_{Y_{n}\in \myn}2^{\st(Y_{n})}x^{\pos(Y_n)}.$$
\end{cor}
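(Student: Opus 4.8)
The plan is to obtain both displayed identities as a single specialization of the three‑variable identity in Theorem~\ref{thm05}, so that no new combinatorial construction is required. I write the identity~\eqref{bijection03} with its own variables $x,y,z$, keeping in mind that the symbol $x$ there tracks $\nega$ and is \emph{not} the variable $x$ appearing in the statement of the corollary. The key structural observation is that, on the permutation side, each of the three statistics $\cdes$, $\cyc$, $\exc$ will carry a factor of $2$ once I also feed in $y=2$: indeed $(2x)^{\cdes}$ and $(2z)^{\exc}$ already supply a power of $2$, and choosing $y=2$ makes $y^{\cyc}$ do the same. The relation $\exc(\pi)+\cyc(\pi)+\cdes(\pi)=n$ then collapses all three powers of $2$ into a common factor $2^n$.

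Concretely, I would substitute $x\mapsto 1$, $y\mapsto 2$, and $z\mapsto x$ (the corollary's variable) into~\eqref{bijection03}. On the left-hand side the factor $x^{\nega(Y_n)}$ becomes $1$, the factor $y^{\st(Y_n)}$ becomes $2^{\st(Y_n)}$, and $z^{\pos(Y_n)}$ becomes $x^{\pos(Y_n)}$, producing exactly $\sum_{Y_n\in\myn}2^{\st(Y_n)}x^{\pos(Y_n)}$. On the right-hand side the three factors become $2^{\cdes(\pi)}$, $2^{\cyc(\pi)}$, and $(2x)^{\exc(\pi)}$; the accumulated power of two is $2^{\cdes(\pi)+\cyc(\pi)+\exc(\pi)}=2^n$, so the right-hand side equals $2^n\sum_{\pi\in\msn}x^{\exc(\pi)}=2^nA_n(x)$. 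This is precisely the second displayed identity of the corollary.

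For the first identity I would divide through by $2^n$ and rewrite the exponent of $2$ on the $\my$-sequence side by means of $\nega(Y_n)+\st(Y_n)+\pos(Y_n)=n$, which gives $2^{\st(Y_n)}/2^n=2^{-\nega(Y_n)-\pos(Y_n)}=\left(\tfrac{1}{2}\right)^{\nega(Y_n)+\pos(Y_n)}$; the stated form follows immediately. There is essentially no obstacle in this argument: the only point demanding any care is the notational collision between the corollary's $x$ (which tracks $\pos$) and the variable $x$ of Theorem~\ref{thm05} (which tracks $\nega$). Once the roles of the three variables are kept straight and the identity $\cdes+\cyc+\exc=n$ is invoked, the corollary is a one-line specialization.
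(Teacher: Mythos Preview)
Your argument is correct and is exactly the specialization the paper intends: after noting $\nega(Y_n)+\st(Y_n)+\pos(Y_n)=n$ and $\cdes(\pi)+\cyc(\pi)+\exc(\pi)=n$, the paper declares the corollary ``immediate,'' and your substitution $x\mapsto 1$, $y\mapsto 2$, $z\mapsto x$ in~\eqref{bijection03} spells out precisely that immediate step. There is nothing to add.
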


In the following sections, we give bijective proofs of the main results. As a consequence, we get a desired proof of~\eqref{bijection04}.
\section{Proof of the left equality of~\eqref{bijection01}}\label{Section-3}
In the following discussion, we always put any 2-elements block of $\m$ into exactly a parenthesis or a square bracket.
For $(a,b)\in\m$, if $(a,b)\in \OEE(\m)$ or $(a,b)\in \EE(\m)$, then we replace $(a,b)$ by $[a,b]$.
Otherwise, the parentheses that contain $a$ and $b$ unchanged.
For example, we replace $(1,3)/(2,4)/(5,8)/(6,7)$ by $(1,3)/[2,4]/[5,8]/(6,7)$.
Set $m=2(1+s_k+t_k)$. Let $\mm\left(P_{1+s_k}\bigcup N_{t_k}\right)$ be the set of perfect matchings of $[m]$ with exactly $1+s_k$ square brackets and $t_k$ parentheses.

Now we present a bracket-breaking algorithm (\BBA ~for short):
\begin{itemize}
  \item [\rm ($S_1$)] Note that $\mm(P_1\bigcup N_0)=[1,2]$. We take $[1,2]$ as the starting point, which corresponds to the first term of a $\my$-sequence. Since $P_1\bigcup N_0=\{1,2,\star\}$, we can perform one of the following three operations:
  \begin{itemize}
    \item [\rm ($c_1$)] Using the element 1 of $P_1\bigcup N_0$ to break the square bracket $[1,2]$ such that $[1,2]$ is replaced by $(1,3)/[2,4]$.
    \item [\rm ($c_2$)] Using the element 2 of $P_1\bigcup N_0$ to break the square bracket $[1,2]$ such that $[1,2]$ is replaced by $[1,4]/(2,3)$.
    \item [\rm ($c_3$)] As for $\star$, we append $[3,4]$ right after $[1,2]$.
  \end{itemize}
  \item [\rm ($S_2$)] For $k\geq 2$, given a $\m\in \mm(P_{1+s_k}\bigcup N_{t_k})$.
We use entries of $P_{1+s_k}\bigcup N_{t_k}$ to break some square brackets or parentheses of $\m$.
Assume that $c,d,e$ and $f$ are positive integers.
For the entries of $P_{1+s_k}$, we distinguish two cases:
\begin{itemize}
  \item [\rm ($c_1$)] Using the element $2i-1$ to break the $i$-th square bracket with the restriction that (i) if the $i$-th square bracket with elements $2c-1$ and $2d$, then we replace $[2c-1,2d]$ by $(2c-1,m+1)/[2d, m+2]$; (ii) if the $i$-th square bracket with elements $2e$ and $2f$, then we replace $[2e,2f]$ by $(2e,m+1)/[2f,m+2]$.
  \item [\rm ($c_2$)] Using the element $2i$ to break the $i$-th square bracket with the restriction that (i) if the $i$-th square bracket with elements $2c-1$ and $2d$, then we replace $[2c-1,2d]$ by $[2c-1,m+2]/(2d, m+1)$; (ii) if the $i$-th bracket with elements $2e$ and $2f$, then we replace $[2e,2f]$ by $[2e,m+2]/(2f,m+1)$.
\end{itemize}
For the entries of $N_{t_k}$, we distinguish three cases:
\begin{itemize}
  \item [\rm ($c_1$)] For $1\leq i\leq t_k$, we use the element $-2i+1$ to break the $i$-th parenthesis with the restriction that (i) if the $i$-th parenthesis with elements $2c-1$ and $2d-1$, then we replace $(2c-1,2d-1)$ by $(2c-1,m+1)/[2d-1, m+2]$;
      (ii) if the $i$-th parenthesis with elements $2e$ and $2f-1$, then we replace $(2e,2f-1)$ by $(2e,m+1)/[2f-1,m+2]$.
  \item [\rm ($c_2$)] For $1\leq i\leq t_k$, we use the element $-2i$ to break the $i$-th parenthesis with the restriction that
  (i) if the $i$-th parenthesis with elements $2c-1$ and $2d-1$, then we replace $(2c-1,2d-1)$ by $[2c-1,m+2]/(2d-1, m+1)$;
  (ii) if the $i$-th parenthesis with elements $2e$ and $2f-1$, then we replace $(2e,2f-1)$ by $[2e,m+2]/[2f-1,m+1]$.
   \item [\rm ($c_3$)] As for $\star$, we append $[m+1,m+2]$ right after $\m$.
\end{itemize}
\end{itemize}

Given a $\my$-sequence $Y_n$ with $\nega(Y_{n})+\st(Y_{n})=i$.
Repeat the \BBA ~$n$-times, we can get a unique perfect matching $\m$ of $[2n]$ with
$\oee(\m)+\ee(\m)=i$.
Conversely, given a perfect matching $\m\in\mmn$ with
$\oee(\m)+\ee(\m)=i$. If we delete $2n-1$ and $2n$,
then we can find the $n$-th element of the corresponding $\my$-sequence. Along the same lines, we can get a unique $\my$-sequence $Y_n$ with $\nega(Y_{n})+\st(Y_{n})=i$.
Thus the \BBA ~gives a bijective proof of the left equality of~\eqref{bijection01}. In fact, using the \BBA, we give a bijective proof of the following result:
\begin{equation*}
\sum_{\m\in\mmn}x^{\oee(\m)+\ee(\m)}y^{\eo(\m)+\oo(\m)}=\sum_{Y_{n}\in \myn}x^{\nega(Y_{n})+\st(Y_{n})}y^{\pos(Y_n)}.
\end{equation*}

\begin{ex}
Let $Y_5=(\star,1,-2,4,-1)$ and let $\m=[1,6]/[2,8]/(3,9)/(4,7)/[5,10]$.
The correspondence between $Y_5$ and $\m$ is built up as follows:
\begin{align*}
1&\rightarrow [1,2]\Leftrightarrow (1,3)/[2,4];\\
-2&\rightarrow (1,3)/[2,4]\Leftrightarrow [1,6]/(3,5)/[2,4]=[1,6]/[2,4]/(3,5);\\
4&\rightarrow [1,6]/[2,4]/(3,5)\Leftrightarrow [1,6]/[2,8]/(4,7)/(3,5)=[1,6]/[2,8]/(3,5)/(4,7);\\
-1&\rightarrow [1,6]/[2,8]/(3,5)/(4,7)\Leftrightarrow [1,6]/[2,8]/(3,9)/[5,10]/(4,7)=[1,6]/[2,8]/(3,9)/(4,7)/[5,10].\\
\end{align*}
\end{ex}
\section{Proof of the right equality of~\eqref{bijection01}}\label{Section-4}
Set $r=1+s_k+t_k$. Denote by $\mq\left(P_{1+s_k}\bigcup N_{t_k}\right)$ the set of Stirling permutations of order $r$ with exactly $1+s_k$ ascent plateaus.
In particular, $\mq \left(P_1\bigcup N_0\right)=\{11 \}$, $\mq \left(P_1\bigcup N_1\right)=\{2211,1221\}$ and $\mq \left(P_2\bigcup N_0\right)=\{1122\}$.
Let us first give a definition of labeled Stirling permutations.
\begin{definition}
Let $\sigma\in \mq\left(P_{1+s_k}\bigcup N_{t_k}\right)$.
If $i_1<i_2<\ldots<i_{1+s_k}$ are the ascent plateaus of $\sigma$, then we put the superscript labels $2\ell-1$ before $i_{\ell}$ and $2\ell$ after it,
where $1\leq \ell\leq 1+s_k$. In the remaining positions, we put the superscript labels $-1,-2,\ldots,-2t_k$ and $\star$ from left to right.
\end{definition}
For example, the labels of 13324421 are given as follows: $$^{-1}1^{1}3^{2}3^{-2}2^{3}4^{4}4^{-3}2^{-4}1^{\star}.$$

Given a $\my$-sequence $Y_n=(y_1,y_2,\ldots,y_n)$.
Now we present a labeled Stirling permutations algorithm (\LSPA ~for short):
\begin{enumerate}
  \item [\rm ($S_1$)] Since $\mq\left(P_{1}\bigcup N_{0}\right)=\{11\}$,
  we take $11$ as the start point, which corresponds to $y_1=\star$. Since $11$ can be labeled as $^{1}1^{2}1^{\star}$, we distinguish three cases:
if $y_2=1$, then we get 2211 by inserting 22 to the position with superscript label 1;
if $y_2=2$, then we get 1221 by inserting 22 to the position with superscript label 2;
if $y_2=\star$, then we we get 1122 by inserting 22 to the position with superscript label $\star$.
  \item [\rm ($S_2$)] When $y_2=1$, we take $^{1}2^{2}2^{-1}1^{-2}1^{\star}$ as the start point.
  Since $y_3\in P_1\bigcup N_1$, we distinguish five cases: if $y_3=1$, then we get $\textbf{3}32211$ by inserting 33
  to the position with superscript label 1; if $y_3=2$, then we get $2\textbf{3}3211$ by inserting 33 to the position
  with superscript label 2; if $y_3=-1$, then we get $\textbf{2}2\textbf{3}311$ by inserting 33 to the position with superscript label -1;
if $y_3=-2$, then we get $\textbf{2}21\textbf{3}31$ by inserting 33 to the position with superscript label -2;
  if $y_3=\star$, then we get $\textbf{2}211\textbf{3}3$ by inserting 33 to the position with superscript label $\star$.
  \item [\rm ($S_3$)] When $y_2=2$, we take $^{-1}1^{1}2^{2}2^{-2}1^{\star}$ as the start point. Note that $y_3\in P_1\bigcup N_1$.
  Along the same lines as the five cases of ($S_2$), we get $133221,123321, $331221$, 122331,122133$.
  \item [\rm ($S_4$)] When $y_2=\star$, we take $^{1}1^{2}1^{3}2^{4}2^{\star}$ as the start point. Note that $y_3\in P_2\bigcup N_1$.
  Similarly, we get 331122,133122,113322112332,112233.
 \item [\rm ($S_5$)]
  Repeat the above procedure, we get all labeled Stirling permutations.
  \end{enumerate}

It is straightforward
to show that each such labeled Stirling permutation will be obtained exactly once. Indeed, given a labeled Stirling permutation of order $n$ with $i$ ascent plateaus,
we can just read the indices of ascent plateaus. Deleting the pair $(2n)(2n)$ gives the entry $y_n$. Along the same lines,
we can get a unique $\my$-sequence $Y_n$ with $\nega(Y_{n})+\st(Y_{n})=i$.
In conclusion, \LSPA~ gives the desired proof of the right equality of~\eqref{bijection01}.

\begin{ex}
Let $Y_6=(\star,2,-1,3,-2,2)$ and let $\sigma=366314455221$.
The correspondence between $Y_6$ and $\sigma$ is built up as follows:
\begin{align*}
2&\rightarrow ^{1}1^{\textbf{2}}1^{\star}\Leftrightarrow 1221;\\
-1&\rightarrow  ^{\textbf{-1}}1^{1}2^{2}2^{-2}1^{\star}\Leftrightarrow 331221;\\
3&\rightarrow ^{1}3^{2}3^{-1}1^{\textbf{3}}2^{4}2^{-2}1^{\star}\Leftrightarrow 33144221;\\
-2&\rightarrow ^{1}3^{2}3^{-1}1^{3}4^{4}4^{\textbf{-2}}2^{-3}2^{-4}1^{\star}\Leftrightarrow 3314455221;\\
2&\rightarrow ^{1}3^{\textbf{2}}3^{-1}1^{3}4^{4}4^{5}5^{6}5^{-2}2^{-3}2^{-4}1^{\star}\Leftrightarrow 366314455221.
\end{align*}
\end{ex}

\section{Proof of~\eqref{bijection03}}\label{Section-5}
\quad

Let us first give a definition of labeled permutations.
\begin{definition}
Let $\pi\in\msn$ with $p$ excedances.
If $i_1<i_2<\cdots <i_p$ are the excedances,
then we put superscript labels $-k$ between $i_k$ and $\pi(i_k)$, where $1 \leq k\leq p$.
In the remaining positions except the first position, we put the superscript labels $1,2,\ldots,n-p$ from left to right.
\end{definition}
In the following discussion, we shall attach superscript labels to all permutations in $\msn$.
For example, the labels of $(\textbf{1},\textbf{3},7,4,2)(5)(\textbf{6},10,8)(\textbf{9},11)$ are given as follows:
$$(\textbf{1}~^{-1}~\textbf{3}^{-2}~7~^1~4~^2~2~^3)(5^4)(\textbf{6}~^{-3}~10~^5~8~^6)(\textbf{9} ~^{-4}~11~^7).$$

A labeled permutations algorithm (\LPA ~for short) for generating $\my$-sequences is given as follows:

Take $(1^1)$ as the start point, which corresponds to the first term $\star$ of a $\my$-sequence.
Let $\pi$ be a labeled permutation in $\msn$. There are $n+1$ permutations of $\msnn$ can be obtained from $\pi$
by inserting $n+1$ to the positions with
superscript labels or as a new cycle $(n+1)$. We distinguish three cases:
\begin{enumerate}
    \item [\rm ($C_1$)]If we insert the entry $n+1$ to the position with superscript label $k$, then the $n$-th term of the corresponding $\my$-sequence is $y_n\in\{2k-1,2k\}$.
    \item [\rm ($C_2$)]If we insert the entry $n+1$ to the position with superscript label $-\ell$, then the $n$-th term of the corresponding $\my$-sequence is $y_n\in\{-2\ell+1,-2\ell\}$.
    \item [\rm ($C_3$)]If we insert the entry $n+1$ at the end of $\pi$ to form a new cycle $(n+1)$, then the $n$-th term of the corresponding $\my$-sequence is $\star$.
  \end{enumerate}
In particular, consider $\pi=(1^1)$. Note that $(1,2)$ is obtained from $(1^1)$ by inserting 2 to the position with superscript label 1.
Hence $y_2\in \{1,2\}$ is the $\my$-sequence that corresponds to $(1,2)$.
Note that $(1)(2)$ is obtained from $(1^1)$ by inserting 2 as a new cycle $(2)$. Hence
$y_2=\star$ is the $\my$-sequence that corresponds to $(1)(2)$.
Take $(1^1)$ as a start point. Repeat the \LPA~ $n$ times, it is easy to verify
that each $\my$-sequence of length $n$ will be obtained exactly once.
Note that $2^{\cdes(\pi)+\exc{\pi}}$ is the number of $\my$-sequences that corresponds to $\pi$. Therefore, the \LPA~ gives a combinatorial proof of~\eqref{bijection03}.

\begin{ex}
Given $\pi=(1,3,5,2,6)(4)$. The \LPA~ can be done if you proceed as follows:
\begin{align*}
(1^1)&\rightarrow (1,2)\Leftrightarrow y_2\in\{1,2\};\\
(1~^{-1}~2~^1)&\rightarrow  (1,3,2)\Leftrightarrow y_3\in\{-1,-2\};\\
(1~^{-1}~3~^1~2~^2)&\rightarrow (1,3,2)(4)\Leftrightarrow y_4\in\{\star\};\\
(1~^{-1}~3~^{1}~2~^{2})(4~^{3})&\rightarrow (1,3,5,2)(4)\Leftrightarrow y_5\in\{1,2\}; \\
(1~^{-1}~3~^{-2}~5~^{1}~2^{2})(4^{3})&\rightarrow (1,3,5,2,6)(4) \Leftrightarrow y_6\in\{3,4\}.
\end{align*}
Therefore, the corresponding $\my$-sequences of $\pi$ are given as follows:
$$(\star,1,-1,\star,1,3);(\star,1,-1,\star,1,4);(\star,1,-1,\star,2,3);(\star,1,-1,\star,2,4);$$
$$(\star,1,-2,\star,1,3);(\star,1,-2,\star,1,4);(\star,1,-2,\star,2,3);(\star,1,-2,\star,2,4);$$
$$(\star,2,-1,\star,1,3);(\star,2,-1,\star,1,4);(\star,2,-1,\star,2,3);(\star,2,-1,\star,2,4);$$
$$(\star,2,-2,\star,1,3);(\star,2,-2,\star,1,4);(\star,2,-2,\star,2,3);(\star,2,-2,\star,2,4).$$
\end{ex}

\end{document}